\newtheorem{theorem}{Theorem}[section]
\newtheorem{lemma}{Lemma}[section]
\newtheorem{definition}{Definition}[section]
\newtheorem{remark}{Remark}[section]
\newcommand{\bal}{\begin{align}}
\newcommand{\bbal}{\begin{align*}}
\newcommand{\beq}{\begin{equation}}
\newcommand{\eeq}{\end{equation}}
\newcommand{\bca}{\begin{cases}}
\newcommand{\eca}{\end{cases}}
\newcommand{\pa}{\partial}
\newcommand{\fr}{\frac}
\newcommand{\De}{\Delta}
\newcommand{\dd}{\mathrm{d}}
\newcommand{\R}{\mathbb{R}}
\newcommand{\g}{\big}
\begin{document}
\title{Ill-posedness for the two component  Degasperis-Procesi equation in critical Besov space}

\author{Jinlu Li$^{1}$, Min Li$^{2,}$\footnote{E-mail: lijinlu@gnnu.edu.cn; limin@jxufe.edu.cn(Corresponding author); mathzwp2010@163.com} and Weipeng Zhu$^{3}$\\
\small $^1$ School of Mathematics and Computer Sciences, Gannan Normal University, Ganzhou 341000, China\\
\small $^2$ Department of mathematics, Jiangxi University Of Finance and Economics, Nanchang 330032, China\\
\small $^3$ School of Mathematics and Big Data, Foshan University, Foshan, Guangdong 528000, China}

\date{\today}

\maketitle\noindent{\hrulefill}

{\bf Abstract:}
In this paper, we study the Cauchy problem for the two component Degasperis-Procesi equation in critical Besov space $B^1_{\infty,1}(\mathbb R)$. By presenting a new construction of initial data, we proved the norm inflation of the corresponding solutions in $B^1_{\infty,1}(\mathbb R)$ and hence ill-posedness. This is quite different from the local well-posedness result \cite{LYZ} for the Degasperis–Procesi equation in critical Besov space $B^1_{\infty,1}(\mathbb R)$ due to the coupled structure of density function.

{\bf Keywords:} two component Degasperis-Procesi equation; Ill-posedness; Critical Besov space.

{\bf MSC (2010):} 35Q53, 37K10.
\vskip0mm\noindent{\hrulefill}

\section{Introduction}\label{sec1}
In this paper, we consider the Cauchy problem for the following two component Degasperis-Procesi equation \cite{POP,YY}
\begin{align}\label{dp0}
\begin{cases}
\pa_tm+3mu_x+m_xu+k_3\rho\rho_x=0, &\quad (t,x)\in \R^+\times\R,\\
\pa_t\rho+k_2u\rho_x+(k_1+k_2)u_x\rho=0,&\quad (t,x)\in \R^+\times\R,\\
m=u-u_{xx},&\quad (t,x)\in \R^+\times\R,\\
u(0,x)=u_0(x), \quad \rho(0,x)=\rho_0(x), &\quad x\in \R,
\end{cases}
\end{align}
here $(k_1,k_2,k_3)=(1,1,c)$ or $(c,1,0)$ and $c$ is an arbitrary constant. System (\ref*{dp0}) first appeared in \cite{POP} as the Hamiltonian extension of the Degasperis–Procesi equation. It is worth mentioning that, in the  case $(k_1,k_2,k_3)=(c,1,0),$ system (\ref{dp0}) is no more coupled and the equation on $\rho$ becomes linear. Therefore, we only consider the first case $(k_1,k_2,k_3)=(1,1,c).$ For simplicity, we might as well assume that $c=1$, which makes system (\ref{dp0}) equivalent to the following quasi-linear evolution equation of hyperbolic type:
\begin{align}\label{dp}
    \begin{cases}
    \pa_tu+uu_x=-\pa_x(1-\pa^2_x)^{-1}(\frac32u^2+\frac 12\rho^2), &\quad (t,x)\in \R^+\times\R,\\
    \pa_t\rho+u\rho_x=-2u_x\rho,&\quad (t,x)\in \R^+\times\R,\\
    u(0,x)=u_0(x), \quad \rho(0,x)=\rho_0(x), &\quad x\in \R.
    \end{cases}
    \end{align}
In particular, for $\rho_0\equiv 0,$ system (\ref*{dp}) reduces to the Degasperis–Procesi (DP) equation\cite{DP}. The  DP  equation  can  be  regarded  as  a  model  for  nonlinear  shallow  water  dynamics  and  its  asymptotic accuracy is the same as for the well known Camassa-Holm equation\cite{DGH}. The DP equation was also proved formally integrable by constructing a Lax pair \cite{DHH} and the direct and inverse scattering approach to pursue it can be seen in \cite{CIL}. Moreover, they also presented \cite{DHH} that the DP equation has a bi-Hamiltonian structure and an infinite number of conservation laws, and admits travelling wave and exact peakon solutions which are analogous to the Camassa–Holm peakons\cite{Lenells,VP}.
The Cauchy problem of the DP equation is locally well-posed in certain Sobolev and Besov spaces \cite{GL,Y1,Y2}, also the non-uniform dependence and ill-posedness problem have recently been studied in \cite{HHG,LYZ1}. It has global strong solutions \cite{LY1,Y2}, the finite-time blow-up solutions \cite{ELY1,ELY2} and global weak solutions \cite{CK,ELY1,Y3}. 
Different form the Camassa-Holm equation,  the DP equation has not only peakon solutions \cite{DHH},
periodic peakon solutions \cite{Y3}, but also  shock peakons\cite{Lun} and the periodic  shock waves \cite{ELY2}.
Both Camassa-Holm and DP equation admit cusped travelling waves \cite{JL,Lenells}. This type of weak solutions are known to arise as solutions for the governing equations for waves in a channel or along a sloped beach \cite{Co35,HE1}, and also as solutions for the governing equations for equatorial ocean waves \cite{Co5,HE2}.

Recently, a lot of literature was devoted to studying the well-posedness (especially the ill-posedness) problem of the Camassa-Holm type equations (CH, DP and Novikov equation etc.) in the critical Besov spaces. For example, Guo et al. \cite{GLMY} prove norm inflation and hence ill-posedness for the Camassa-Holm type equations in the critical Sobolev space $H^\frac32$ and even in the Besov space $B^{1+\frac1p}_{p,r}(\mathbb{R})$ with $p\in [1,\infty], r\in (1,\infty],$ which implies $B^{1+\frac1p}_{p,1}(\mathbb{R})$ is the critical Besov space for these Camassa-Holm type equations. Li et al. \cite{LYZ2,LYZ3} demonstrated the non-continuity and then sharp ill-posedness of the Camassa-Holm type equations in $B^{\sigma}_{p,\infty}(\mathbb{R})$ with $\sigma>\max\{\frac32,1+\frac1p\}$. Later, it has been proved the local well-posedness for the Cauchy problem of the Camassa-Holm type equations in $B^{1+\frac1p}_{p,1}(\mathbb{R})$ with $p\in[1,\infty)$ by adopt the compactness argument and Lagrangian coordinate transformation\cite{YYG}. In the remaining case $p=\infty$, Guo et al. \cite{GYY}  prove the ill-posedness for the Camassa-Holm equation in $B^1_{\infty,1}.$

Different from the CH equation, the authors in \cite{LYZ} have proved the well-posedness for the Cauchy problem of the DP equation in critical Besov space $B^1_{\infty,1}.$ For the two component DP equation(\ref{dp}), the local well-posedness problem in certain Sobolev and Besov spaces has been studied by Yan et al.in \cite{YY}, here we mainly focus on the well-posedness problem in critical Besov space $B^1_{\infty,1}(\mathbb R)$. As the structure is changed from the DP equation due to the coupled density function $\rho$, here we have a different result for the two component DP equation (\ref{dp}) in this paper. Now, let us state our main theorem of this paper.

\begin{theorem}\label{th1}
For any $n\in \mathbb{Z}^+$ large enough there exists $(u_0,\rho_0)$ with
$$\|u_0\|_{B^1_{\infty,1}}+\|\rho_0\|_{B^0_{\infty,1}}\leq \frac{1}{\log\log n},$$
such that if we denote by $(u,\rho)\in \mathcal{C}([0,1];H^{3}\times H^2)$, the solution of the two component Degasperis-Procesi with initial data $(u_0,\rho_0)$, then
$$\|u(t_0)\|_{B^1_{\infty,1}}+\|\rho(t_0)\|_{B^0_{\infty,1}}\geq {\log\log n},$$
with $t_0\in (0,\frac{1}{\log n}]$.
\end{theorem}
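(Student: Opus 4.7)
The plan is to exhibit initial data $(u_0,\rho_0)$ of critical Besov norm at most $1/\log\log n$ whose smooth solution develops critical Besov norm at least $\log\log n$ within time $t_0\le 1/\log n$. The instability absent from the scalar DP equation \cite{LYZ} comes from the coupling term $-2u_x\rho$ in the transport equation for $\rho$: this bilinear term sits right at the edge of boundedness on $B^0_{\infty,1}$, and by choosing the phases of many high-frequency wavepackets in $u_0$ and $\rho_0$ to align, one can saturate the product estimate and force rapid norm inflation.

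\textbf{Step 1: initial data and local existence.} For a fixed Schwartz bump $\phi$ with $\widehat{\phi}$ supported near the origin and integer parameters $M,N_0\in\mathbb{Z}^+$ with $N_0\gg M$ (both to be tuned as functions of $n$), I would take
\[
u_0(x)=\frac{1}{M\log\log n}\sum_{k=1}^{M} 2^{-(N_0+k)}\phi(x)\cos(2^{N_0+k}x),\qquad
\rho_0(x)=\frac{1}{M\log\log n}\sum_{k=1}^{M}\phi(x)\cos(2^{N_0+k}x).
\]
Bernstein's inequality applied to the almost-disjoint Fourier supports gives $\|u_0\|_{B^1_{\infty,1}}+\|\rho_0\|_{B^0_{\infty,1}}\lesssim 1/\log\log n$. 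Since $(u_0,\rho_0)\in H^3\times H^2$, the local Sobolev theory of \cite{YY} produces a unique solution $(u,\rho)\in \mathcal{C}([0,1];H^3\times H^2)$, which justifies all subsequent Littlewood--Paley manipulations along characteristics.

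\textbf{Step 2: bootstrap and low-frequency pumping.} I would argue by contradiction, assuming
\[
\sup_{t\in[0,1/\log n]}\bigl(\|u(t)\|_{B^1_{\infty,1}}+\|\rho(t)\|_{B^0_{\infty,1}}\bigr)<\log\log n,
\]
and then tracking the evolution of a single low-frequency Littlewood--Paley block $\Delta_{j_0}\rho$ with $j_0$ tuned to a ``beat'' frequency $2^{N_0+k}-2^{N_0+k-1}=2^{N_0+k-1}$ of the wavepackets. Applying $\Delta_{j_0}$ to the $\rho$-equation yields
\[
\Delta_{j_0}\rho(t)=\Delta_{j_0}\rho_0-\int_0^t\Delta_{j_0}\bigl(u\partial_x\rho+2(\partial_x u)\rho\bigr)(\tau)\,d\tau+\text{commutator error},
\]
where the commutator error is controlled by the bootstrap assumption. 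Expanding $(\partial_x u_0)\rho_0$ with product-to-sum identities, each of the $M$ pairs of consecutive frequencies contributes a ``beat'' term proportional to $\phi^2(x)\sin(2^{N_0+k-1}x)$ of amplitude $(M\log\log n)^{-2}$, and these contributions concentrate inside the single block $\Delta_{j_0}$ once $j_0$ is chosen.

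\textbf{Step 3: closing the loop.} Evaluating at a well-chosen $t_0\in (0,1/\log n]$ and iterating the bootstrap argument over a dyadic partition of $[0,1/\log n]$, the low-frequency block $\Delta_{j_0}\rho$ accumulates amplitude exceeding $\log\log n$, contradicting the bootstrap assumption and yielding the desired lower bound. The main technical obstacle is this iteration: a single-step Picard estimate falls short of the target amplification by a polynomial-in-$\log\log n$ factor, so the gain must be compounded either by repeated time-iteration in the transport equation or by exploiting the feedback of $\tfrac12\rho^2$ into the $u$-equation through $(1-\partial_x^2)^{-1}$, which transports the amplification back into $\|u(t)\|_{B^1_{\infty,1}}$. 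Tuning $M$ and $N_0$ at the precise boundary where this compounded gain just overwhelms the commutator and higher-order errors is the delicate part of the argument.
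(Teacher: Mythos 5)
Your proposal has a genuine gap at its core, and the mechanism you single out is not the one that actually drives the inflation. First, the frequency arithmetic in Step 2 does not work: with dyadically spaced carriers $2^{N_0+k}$, the beat frequencies $2^{N_0+k}-2^{N_0+k-1}=2^{N_0+k-1}$ are all distinct, so the $M$ beat terms land in $M$ \emph{different} Littlewood--Paley blocks rather than concentrating in a single $\Delta_{j_0}$. Even if you could force concentration, $M$ contributions of amplitude $(M\log\log n)^{-2}$ sum to $M^{-1}(\log\log n)^{-2}$, and after one Picard iterate over a time interval of length $1/\log n$ this is far below the target $\log\log n$. You acknowledge this in Step 3, but the proposed remedy --- ``compounding'' the gain by repeated time-iteration or by feedback through $\tfrac12\rho^2$ --- is precisely the entire difficulty, and it is not carried out; as written the argument does not close. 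A further structural issue is that the quantity you try to inflate is $\|\rho\|_{B^0_{\infty,1}}$ via the term $-2u_x\rho$, whereas with data of size $(\log\log n)^{-1}$ this term is quadratically small and there is no reason for it to dominate the commutator and transport errors.

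The paper's construction avoids all of this by a different and essentially one-step mechanism. It takes $u_0=0$ and
$\rho_0(x)=n^{-1/2}\log n\cdot\sin\bigl(\tfrac{17}{12}2^nx\bigr)\sum_{\ell\in\mathbb{N}(n)}\phi\bigl(2^\ell(x-2^{2n+\ell})\bigr)$,
i.e.\ a \emph{single} carrier frequency $2^n$ modulated by about $n/4$ widely separated envelopes at \emph{distinct} scales $2^\ell$, $n/4\le\ell\le n/2$. Then $\rho_0$ occupies one block, so $\|\rho_0\|_{B^0_{\infty,1}}\le Cn^{-1/2}\log n\ll(\log\log n)^{-1}$; but squaring cancels the carrier ($\sin^2=\tfrac12-\tfrac12\cos$) and leaves the envelopes $\phi^2(2^\ell\cdot)$, which occupy $\sim n/4$ distinct blocks, each of size $\gtrsim n^{-1}\log^2 n$, whence $\|(\rho_0)^2\|_{B^0_{\infty,1}(\mathbb{N}(n))}\ge c\log^2 n$. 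This mismatch between the $\ell^1$-in-$j$ Besov norm of $\rho_0$ and of $\rho_0^2$ is the engine: the source term $E=-\tfrac12\partial_x(1-\partial_x^2)^{-1}(\rho^2)$ in the $u$-equation then yields, along the Lagrangian flow, $\|u(t)\|_{B^1_{\infty,1}}\ge ct\log^2 n-Cn^{-1/2}\log^3 n$, which at $t=1/\log n$ already exceeds $\log\log n$ with no iteration. If you want to salvage your approach, the essential missing idea is to give your wavepackets pairwise distinct envelope scales so that the quadratic interaction spreads over many blocks and the $\ell^1$ summation in $B^0_{\infty,1}$ produces the gain, and to route that gain through the nonlocal $\rho^2$ source in the $u$-equation rather than through $-2u_x\rho$.
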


\begin{remark}
    This implies the ill-posedness of the two component DP equation (\ref{dp}) in Besov space $B^1_{\infty,1}(\mathbb R)$. In fact, by Theorem \ref{th1} we have construct the norm inflation in space $B^1_{\infty,1}(\mathbb R)$, thus the data-to-solution map is not continuous at origin in this space, in the sense of Hadamard this implies the ill-posedness. 
  \end{remark}

\section{Littlewood-Paley analysis}\label{sec2}
Next, we will recall some facts about the Littlewood-Paley decomposition, the nonhomogeneous Besov spaces and their some useful properties (see \cite{B} for more details).

Let $\mathcal{B}:=\{\xi\in\mathbb{R}:|\xi|\leq 4/3\}$ and $\mathcal{C}:=\{\xi\in\mathbb{R}:3/4\leq|\xi|\leq 8/3\}.$
Choose a radial, non-negative, smooth function $\chi:\R\mapsto [0,1]$ such that it is supported in $\mathcal{B}$ and $\chi\equiv1$ for $|\xi|\leq3/4$. Setting $\varphi(\xi):=\chi(\xi/2)-\chi(\xi)$, then we deduce that $\varphi$ is supported in $\mathcal{C}$. Moreover,
\begin{eqnarray*}
\chi(\xi)+\sum_{j\geq0}\varphi(2^{-j}\xi)=1 \quad \mbox{ for any } \xi\in \R.
\end{eqnarray*}
We should emphasize that the fact $\varphi(\xi)\equiv 1$ for $4/3\leq |\xi|\leq 3/2$ will be used in the sequel.

For every $u\in \mathcal{S'}(\mathbb{R})$, the inhomogeneous dyadic blocks ${\Delta}_j$ are defined as follows
\begin{equation*}
{\Delta_ju=}
\begin{cases}
0,   &if \quad j\leq-2;\\
\chi(D)u=\mathcal{F}^{-1}(\chi \mathcal{F}u),  &if \quad j=-1;\\
\varphi(2^{-j}D)u=\mathcal{F}^{-1}\g(\varphi(2^{-j}\cdot)\mathcal{F}u\g), &if  \quad j\geq0.
\end{cases}
\end{equation*}
In the inhomogeneous case, the following Littlewood-Paley decomposition makes sense
$$
u=\sum_{j\geq-1}{\Delta}_ju\quad \text{for any}\;u\in \mathcal{S'}(\mathbb{R}).
$$
\begin{definition}[see \cite{B}]
Let $s\in\mathbb{R}$ and $(p,r)\in[1, \infty]^2$. The nonhomogeneous Besov space $B^{s}_{p,r}(\R)$ is defined by
\begin{align*}
B^{s}_{p,r}(\R):=\Big\{f\in \mathcal{S}'(\R):\;\|f\|_{B^{s}_{p,r}(\mathbb{R})}<\infty\Big\},
\end{align*}
where
\begin{numcases}{\|f\|_{B^{s}_{p,r}(\mathbb{R})}=}
\left(\sum_{j\geq-1}2^{sjr}\|\Delta_jf\|^r_{L^p(\mathbb{R})}\right)^{\fr1r}, &if $1\leq r<\infty$,\nonumber\\
\sup_{j\geq-1}2^{sj}\|\Delta_jf\|_{L^p(\mathbb{R})}, &if $r=\infty$.\nonumber
\end{numcases}
\end{definition}
The following Bernstein's inequalities will be used in the sequel.
\begin{lemma}[see Lemma 2.1 in \cite{B}] \label{lem2.1} Let $\mathcal{B}$ be a Ball and $\mathcal{C}$ be an annulus. There exist constants $C>0$ such that for all $k\in \mathbb{N}\cup \{0\}$, any positive real number $\lambda$ and any function $f\in L^p$ with $1\leq p \leq q \leq \infty$, we have
\begin{align*}
&{\rm{supp}}\hat{f}\subset \lambda \mathcal{B}\;\Rightarrow\; \|\pa_x^kf\|_{L^q}\leq C^{k+1}\lambda^{k+(\frac{1}{p}-\frac{1}{q})}\|f\|_{L^p},  \\
&{\rm{supp}}\hat{f}\subset \lambda \mathcal{C}\;\Rightarrow\; C^{-k-1}\lambda^k\|f\|_{L^p} \leq \|\pa_x^kf\|_{L^p} \leq C^{k+1}\lambda^k\|f\|_{L^p}.
\end{align*}
\end{lemma}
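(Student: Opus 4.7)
The plan is to construct initial data $(u_0,\rho_0)$ that is small in $B^1_{\infty,1}\times B^0_{\infty,1}$ but whose solution exhibits norm inflation in the same space on the short time scale $t_0\sim 1/\log n$. The engine of the inflation is the bilinear coupling $-2u_x\rho$ in the $\rho$-equation of \eqref{dp}, which is absent from the scalar DP equation. This reflects the fact that $B^0_{\infty,1}$ fails to be an algebra: the product of two $B^0_{\infty,1}$ functions can have $B^0_{\infty,1}$-norm much larger than the product of the factor norms, and this is exactly the mechanism that forces the data-to-solution map to be discontinuous. The overall strategy follows the multi-scale philosophy used for the Camassa--Holm equation in \cite{GYY,GLMY}, adapted to the present coupled setting.

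\textbf{Construction and short-time expansion.} I would take initial data of the form
$$
u_0 = \sum_{k=1}^{M} a_k\,\psi_k,\qquad \rho_0 = \sum_{k=1}^{M} b_k\,\varphi_k,
$$
where $\psi_k,\varphi_k$ are Schwartz bumps spectrally localized in the Littlewood-Paley annulus $\{|\xi|\sim 2^{n_k}\}$ at well-separated, very high frequency scales, $M\sim \log n$, and the amplitudes $a_k,b_k$ are tuned so that $\|u_0\|_{B^1_{\infty,1}}+\|\rho_0\|_{B^0_{\infty,1}}\le (\log\log n)^{-1}$. Since the data is smooth with small Sobolev norm, the local well-posedness theory of \cite{YY} delivers the solution $(u,\rho)\in \mathcal{C}([0,1];H^3\times H^2)$ appearing in the statement. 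Writing the $\rho$-equation in Duhamel form and performing a Taylor-in-$t$ expansion around $t=0$ gives
$$
\rho(t_0) = \rho_0 - t_0\bigl(u_0\rho_{0,x}+2u_{0,x}\rho_0\bigr) + \mathcal{E}(t_0),
$$
with $\|\mathcal{E}(t_0)\|_{B^0_{\infty,1}}\le C t_0^2$, the error bound following from the $H^3\times H^2$-a priori estimates on $(u,\rho)$, Sobolev embeddings, and Bernstein's inequalities (Lemma \ref{lem2.1}).

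\textbf{Multi-scale lower bound and conclusion.} The decisive step, and the main obstacle, is to establish the quantitative product lower bound
$$
\|u_0\rho_{0,x}+2u_{0,x}\rho_0\|_{B^0_{\infty,1}} \gtrsim \log n \cdot \log\log n
$$
for the above construction. Once this is in hand, the triangle inequality
$$
\|\rho(t_0)\|_{B^0_{\infty,1}}\ge t_0\,\|u_0\rho_{0,x}+2u_{0,x}\rho_0\|_{B^0_{\infty,1}}-\|\rho_0\|_{B^0_{\infty,1}}-Ct_0^2
$$
together with $t_0=1/\log n$ yields $\|\rho(t_0)\|_{B^0_{\infty,1}}\ge \log\log n$, which proves the theorem. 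The lower bound is obtained by exploiting the $\ell^1$-summation feature of $B^0_{\infty,1}$: the bump profiles and their frequency supports are engineered, via a paraproduct analysis, so that the bilinear expression populates many distinct Littlewood-Paley shells with constructively reinforcing amplitudes, beating by the required factor the naive product bound $\lesssim (\log\log n)^{-2}$ that would follow if $B^0_{\infty,1}$ were an algebra. Simultaneously keeping $\|u_0\|_{B^1_{\infty,1}}+\|\rho_0\|_{B^0_{\infty,1}}$ small while maximizing this failure of the product estimate is the real technical work; it is also where the coupled structure of \eqref{dp} becomes essential, since the scalar DP equation lacks the $u_x\rho$ term that produces the inflation.
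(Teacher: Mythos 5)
Your proposal does not address the statement you were asked to prove. The statement is Lemma \ref{lem2.1}, the classical Bernstein inequalities for functions whose Fourier transform is supported in a ball $\lambda\mathcal{B}$ or an annulus $\lambda\mathcal{C}$. What you have written is instead an outline of a proof of the main ill-posedness theorem (Theorem \ref{th1}). In the paper, Lemma \ref{lem2.1} is not proved at all: it is quoted verbatim from Lemma 2.1 of \cite{B}. A proof of it would proceed along entirely different lines --- for the ball case one writes $f=\check{\eta}_\lambda * f$ for a bump $\eta$ equal to $1$ on $\mathcal{B}$, applies Young's inequality and tracks the $\lambda$-scaling of $\|\partial_x^k\check{\eta}_\lambda\|_{L^r}$; for the annulus case the reverse inequality comes from dividing by $(i\xi)^k$ on the annulus and again convolving. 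None of that machinery appears in your text.

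Separately, even read as a sketch of Theorem \ref{th1}, your outline diverges from the paper in a way that matters. You place the inflation mechanism in the transport equation for $\rho$, via the bilinear term $-2u_x\rho$, and you take both $u_0$ and $\rho_0$ nonzero with the goal of making $\|\rho(t_0)\|_{B^0_{\infty,1}}$ large. The paper instead takes $u_0\equiv 0$ and a single highly oscillatory, multi-bump $\rho_0$, and drives the growth of $\|u(t_0)\|_{B^1_{\infty,1}}$ through the nonlocal source term $E=-\tfrac12\partial_x(1-\partial_x^2)^{-1}(\rho^2)$ in the $u$-equation: the key point is the lower bound $\|(\rho_0)^2\|_{B^0_{\infty,1}(\mathbb{N}(n))}\gtrsim \log^2 n$ of Lemma \ref{le-e2}, obtained because squaring the oscillation $\sin(\tfrac{17}{12}2^n x)$ produces a low-frequency component spread over the many shells $j\in\mathbb{N}(n)$, combined with Lagrangian-coordinate estimates to control the commutator and remainder terms. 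Your claimed product lower bound $\|u_0\rho_{0,x}+2u_{0,x}\rho_0\|_{B^0_{\infty,1}}\gtrsim \log n\cdot\log\log n$ is asserted but not substantiated, and with $u_0=0$ (as in the paper) that term vanishes identically, so the two mechanisms are genuinely different. In any case, the task here was Lemma \ref{lem2.1}, and for that statement your submission contains no proof.
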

\begin{lemma}[see Lemma 2.100 in \cite{B}] \label{lem2.2}
Let $1 \leq r \leq \infty$, $1 \leq p \leq p_{1} \leq \infty$ and $\frac{1}{p_{2}}=\frac{1}{p}-\frac{1}{p_{1}}$. There exists a constant $C$ depending continuously on $p,p_1$, such that
$$
\left\|\left(2^{j}\left\|[\Delta_{j},v \pa_x] f\right\|_{L^{p}}\right)_{j}\right\|_{\ell^{r}} \leq C\left(\|\pa_x v\|_{L^{\infty}}\|f\|_{B_{p, r}^{1}}+\|\pa_x f\|_{L^{p_{2}}}\|\pa_x v\|_{B_{p_1,r}^{0}}\right).
$$
\end{lemma}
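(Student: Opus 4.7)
The plan is to exhibit a family of smooth initial data $(u_0,\rho_0)$, parameterized by large $n\in\mathbb{Z}^+$, which is small in $B^1_{\infty,1}\times B^0_{\infty,1}$ but whose corresponding smooth solution $(u,\rho)\in\mathcal{C}([0,1];H^3\times H^2)$ of (\ref{dp}) (existing by the local well-posedness of \cite{YY}) attains a Besov norm of size at least $\log\log n$ by some $t_0\in(0,(\log n)^{-1}]$. The approach follows the norm-inflation blueprint for Camassa--Holm-type equations in critical Besov spaces (\cite{GLMY,GYY,LYZ2,LYZ3}), but it must exploit the couplings $\rho^2/2$ in the $u$-equation and $-2u_x\rho$ in the $\rho$-equation that distinguish system (\ref{dp}) from the pure DP equation (which is $B^1_{\infty,1}$-well-posed by \cite{LYZ}).

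For the initial data I would take high-frequency wave packets spectrally localized in a single dyadic annulus $\{|\xi|\sim 2^N\}$ with $N=N(n)$ large, possibly superposed with a fixed low-frequency ``transport'' bump. The amplitudes and phases of the two packets will be chosen so that: (a)~$\|u_0\|_{B^1_{\infty,1}}+\|\rho_0\|_{B^0_{\infty,1}}\leq(\log\log n)^{-1}$; (b)~$\|\pa_xu_0\|_{L^\infty}$ and $\|\rho_0\|_{L^\infty}$ are of order $(\log\log n)^{-1}$; (c)~the pointwise product $u_{0,x}\rho_0$ has a non-cancelling low-frequency component of size at least $c(\log\log n)^{-2}$, which will seed the inflation.

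Next I would apply $\Delta_j$ to each equation of (\ref{dp}) and rewrite them along the characteristics $X$ of $\pa_t+u\pa_x$. For $\rho$,
\bbal
\frac{\dd}{\dd t}(\Delta_j\rho)\circ X=-2\Delta_j(u_x\rho)\circ X+[u,\Delta_j\pa_x]\rho\circ X,
\end{align*}
with an analogous identity for $\Delta_ju$. The commutator is controlled by Lemma \ref{lem2.2} (and a $B^0$-variant for $\rho$) and remains of order $(\log\log n)^{-2}$ as long as the Besov norms remain small. A Bony paraproduct decomposition of $\Delta_j(u_x\rho)$ isolates the low-frequency resonant remainder $R(u_x,\rho)$, which carries the seed prepared above into a fixed block $\Delta_{j_0}\rho$. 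Transporting this inflation back into $u$ through the $\rho^2/2$ source and iterating the coupling on $[0,t_0]$ with $t_0\sim(\log n)^{-1}$ should yield a compounded growth $\gtrsim\log\log n$ for $\|u(t_0)\|_{B^1_{\infty,1}}+\|\rho(t_0)\|_{B^0_{\infty,1}}$.

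The principal obstacle is a quantitative bootstrap balancing the three logarithmic scales: initial smallness $(\log\log n)^{-1}$, inflation time $(\log n)^{-1}$, and target largeness $\log\log n$. One must (i) keep the smooth solution regular enough on $[0,t_0]$; (ii) bound the transport, commutator, and off-resonant paraproduct errors so that the seed is not cancelled; and (iii) compound the $u$--$\rho$ coupling enough times to accumulate the factor $(\log\log n)^2$. Calibrating $N=N(n)$ and the amplitudes to balance these competing logarithms is where the bulk of the technical work lies; once the bootstrap closes, Theorem \ref{th1} follows.
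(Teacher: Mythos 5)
Your proposal does not address the statement you were asked to prove. The statement is Lemma \ref{lem2.2}, the commutator estimate
$$
\left\|\left(2^{j}\left\|[\Delta_{j},v \pa_x] f\right\|_{L^{p}}\right)_{j}\right\|_{\ell^{r}} \leq C\left(\|\pa_x v\|_{L^{\infty}}\|f\|_{B_{p, r}^{1}}+\|\pa_x f\|_{L^{p_{2}}}\|\pa_x v\|_{B_{p_1,r}^{0}}\right),
$$
which the paper does not prove but quotes verbatim from Lemma 2.100 of Bahouri--Chemin--Danchin \cite{B}. What you have written instead is a strategy outline for Theorem \ref{th1} (the norm-inflation result), i.e.\ a plan for the main theorem in which Lemma \ref{lem2.2} is merely one of the tools. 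Nothing in your text engages with the object $[\Delta_j, v\pa_x]f$, with the weight $2^j$, with the role of the exponents $p, p_1, p_2$, or with why the right-hand side splits into a $\|\pa_x v\|_{L^\infty}\|f\|_{B^1_{p,r}}$ piece and a $\|\pa_x f\|_{L^{p_2}}\|\pa_x v\|_{B^0_{p_1,r}}$ piece.

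A proof of the actual statement would proceed along standard lines: decompose $v\pa_x f$ by Bony's paraproduct into $T_v\pa_x f+T_{\pa_x f}v+R(v,\pa_x f)$; for the paraproduct term $T_v\pa_x f$ the commutator $[\Delta_j,S_{k-1}v]\pa_x\Delta_k f$ is written as a convolution against the kernel of $\Delta_j$, and a first-order Taylor expansion of $v$ gains a factor $2^{-j}\|\pa_x v\|_{L^\infty}$ that cancels the external weight $2^j$ and produces the first term on the right; the remaining paraproduct and remainder contributions are estimated directly (no commutator structure needed) using H\"older with $\frac1p=\frac1{p_1}+\frac1{p_2}$, which is where the second term and the constraint $p\leq p_1$ enter. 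If your assignment was to supply a proof of Lemma \ref{lem2.2}, you need to carry out this argument (or reproduce the one in \cite{B}); the norm-inflation outline you gave, whatever its merits as a sketch of Theorem \ref{th1}, is not responsive to the statement.
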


\section{Proof of Theorem \ref{th1}}

For localization, we first introduce the following bump function in the frequency space.
let $\widehat{\theta}\in \mathcal{C}^\infty_0(\mathbb{R})$ be a smooth and even function with values in $[0, 1]$ which satisfies
\bbal
\widehat{\theta}(\xi)=
\bca
1, \quad \mathrm{if} \ |\xi|\leq \frac{1}{200},\\
0, \quad \mathrm{if} \ |\xi|\geq \frac{1}{100}.
\eca
\end{align*}
Then, shift the frequency of the above function by letting $\phi(x)=\theta(x)
\sin(\frac{17}{24}x)$. For the convenience of notation, hereafter
\bbal
n\in 16\mathbb{N}^+=\{16,32,48,\cdots\},~~~~\mathbb{N}(n)=\{k\in \mathbb{N}^+:\frac{n}{4}\leq k\leq\frac{n}{2}\}.
\end{align*}
and define the norm $B^0_{\infty,1}\left(\mathbb{N}(n)\right)$ by
\bbal
\left\|u\right\|_{B^0_{\infty,1}(\mathbb{N}(n))}= \sum_{j\in \mathbb{N}(n)}\left\|\De_j u\right\|_{L^\infty}.
\end{align*}
Now, we construct the initial data $(u_0,\rho_0)$ as following:
\bbal
u_0(x)=0, \qquad \rho_0(x)&=n^{-\frac{1}{2}}\log n\cdot\sin(\frac{17}{12}2^nx)\cdot\sum_{\ell\in \mathbb{N}(n)}
\phi(2^\ell(x-2^{2n+\ell})).
\end{align*}

\subsection{Estimation of initial data}
First of all, we give the estimates for $\rho_0$.
\begin{lemma}\label{le-e1}
There exists a positive constant $C$ independent of $n$ such that
\bbal
&\|\rho_0\|_{L^\infty}\leq C n^{-\frac12}\log n,\\
&\|\rho_0\|_{B^0_{\infty,1}}\leq C\|\rho_0\|_{L^\infty}\leq C n^{-\frac12}\log n.
\end{align*}
\end{lemma}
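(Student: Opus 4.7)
My plan has two ingredients: a pointwise $L^\infty$ bound using the fact that the bumps in the sum are essentially disjoint, and a frequency-localization argument that reduces the $B^0_{\infty,1}$ norm to a single Littlewood-Paley block.

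First I would handle the $L^\infty$ estimate. Since $\widehat{\theta}$ is compactly supported, $\theta$ and hence $\phi(x)=\theta(x)\sin(\tfrac{17}{24}x)$ lie in the Schwartz class, so $|\phi(y)|\leq C_N(1+|y|)^{-N}$ for every $N$. The centers $2^{2n+\ell}$ of the summands grow geometrically in $\ell$, so for any fixed $x$ there is at most one $\ell_0\in\mathbb{N}(n)$ with $|x-2^{2n+\ell_0}|\leq 2^{2n+\ell_0-1}$; for every other $\ell$ we have $|x-2^{2n+\ell}|\gtrsim 2^{2n+\max(\ell,\ell_0)}$, hence $2^{\ell}|x-2^{2n+\ell}|$ is exponentially large. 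Summing the Schwartz tails gives
\[
\Bigl|\sum_{\ell\in\mathbb{N}(n)}\phi(2^\ell(x-2^{2n+\ell}))\Bigr|\leq C\|\phi\|_{L^\infty}+C_N\sum_{\ell\neq\ell_0}(\text{huge})^{-N}\leq C,
\]
uniformly in $x$ and $n$, which yields $\|\rho_0\|_{L^\infty}\leq Cn^{-1/2}\log n$.

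For the Besov bound, the point is that $\rho_0$ has its Fourier transform localized in a thin annulus around $|\xi|\approx \tfrac{17}{12}2^n$. Indeed, $\operatorname{supp}\widehat{\phi}\subset\{|\xi\mp\tfrac{17}{24}|\leq \tfrac{1}{100}\}$ by the convolution structure, so the dilated-translated bump $\phi(2^\ell(x-2^{2n+\ell}))$ has Fourier support in $\{|\xi\mp \tfrac{17}{24}2^\ell|\leq 2^\ell/100\}$. Multiplying by $\sin(\tfrac{17}{12}2^n x)$ shifts frequencies by $\pm\tfrac{17}{12}2^n$, so the summand has Fourier support contained in $\{|\xi|/2^n\in [\tfrac{17}{12}-2^{\ell-n},\tfrac{17}{12}+2^{\ell-n}]\}$. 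Since $\ell\leq n/2$, for $n$ large this interval lies strictly inside $[4/3,3/2]$, where $\varphi(2^{-n}\cdot)\equiv 1$ and every other term of the dyadic partition of unity vanishes.

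Consequently $\Delta_n\rho_0=\rho_0$ and $\Delta_j\rho_0=0$ for all $j\neq n$, which collapses the Besov sum to a single term:
\[
\|\rho_0\|_{B^0_{\infty,1}}=\|\Delta_n\rho_0\|_{L^\infty}=\|\rho_0\|_{L^\infty}\leq Cn^{-1/2}\log n.
\]
The only delicate step is the frequency-support computation, specifically checking that the width $\sim 2^{n/2}$ coming from the modulation is dwarfed by the main frequency $\tfrac{17}{12}2^n$ so that the whole spectrum sits inside a single dyadic annulus; the $L^\infty$ estimate is then essentially bookkeeping on Schwartz tails with well-separated centers.
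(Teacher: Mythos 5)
Your proposal is correct and follows essentially the same route as the paper: Schwartz decay of $\phi$ together with the wide separation of the centers $2^{2n+\ell}$ gives the $L^\infty$ bound, and the Fourier-support computation showing $\operatorname{supp}\widehat{\rho_0}\subset\{|\xi|\approx\tfrac{17}{12}2^n\}$ (inside the region where $\varphi(2^{-n}\cdot)\equiv1$) collapses the Besov sum to the single block $j=n$. The paper merely asserts these two facts more tersely; your write-up supplies the same details.
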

\begin{proof} By the definition of $\rho_0$ and using convolution expressions, it's easy to verify that the support set of $\rho_0$ satisfies
\bal\label{spr}
\mathrm{supp} \ \widehat{\rho_0}\subset \left\{\xi\in\R: \ \frac{17}{12}2^{n}-\frac{17}{12}2^{\ell}-\fr12\leq |\xi|\leq \frac{17}{12}2^{n}+\frac{17}{12}2^{\ell}+\fr12\right\}.
\end{align}
Since $\phi$ and $\check{\chi}$  are Schwartz functions, we have
\bal\label{phi-es}
|\phi(x)|+|\check{\chi}(x)|\leq C(1+|x|)^{-M}, \qquad  M\geq 100.
\end{align}
then, by a direct computation, we have
\bbal
\|\rho_0\|_{L^\infty}&\leq Cn^{-\frac{1}{2}}\log n\left\|\sum_{\ell\in \mathbb{N}(n)}\phi(2^\ell(x-2^{2n+\ell}))\right\|_{L^\infty}
\\&\leq Cn^{-\frac{1}{2}}\log n\left\|\sum_{\ell\in \mathbb{N}(n)}\frac{1}{(1+|2^\ell x-2^{2n+2\ell}|)^M}\right\|_{L^\infty}\leq Cn^{-\frac{1}{2}}\log n.
\end{align*}
Due to the fact of (\ref*{spr}) and the definition of $\Delta_j$, it's easy to check that $\Delta_j\rho_0=\rho_0$ if $j=n$ and $\Delta_ju_0=0$ if $j\neq n$, we deduce that
\bbal
\|\rho_0\|_{B^0_{\infty,1}}&\leq C\|\rho_0\|_{L^\infty}\leq C n^{-\frac12}\log n.
\end{align*}
This completes the proof of Lemma \ref{le-e1}.
\end{proof}

The following lower bound estimate of the squared term is crucial to our proof.
\begin{lemma}\label{le-e2}
There exists a positive constant $c$ independent of $n$ such that
\bbal
\left\|(\rho_0)^2\right\|_{B^0_{\infty,1}\left(\mathbb{N}(n)\right)}\geq c\log^2n, \qquad n\gg1.
\end{align*}
\end{lemma}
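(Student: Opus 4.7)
The idea is to exploit a resonance produced by squaring. Each factor of $\rho_0$ carries the modulation $\sin(\tfrac{17}{12}2^n x)$, so $\rho_0^2$ contains $\sin^2(\tfrac{17}{12}2^n x)=\tfrac12-\tfrac12\cos(\tfrac{17}{6}2^n x)$; the constant mean $\tfrac12$ multiplies the ``second harmonic'' of the bumps generated by $\phi^2(y)=\theta^2(y)\sin^2(\tfrac{17}{24}y)=\tfrac12\theta^2(y)-\tfrac12\theta^2(y)\cos(\tfrac{17}{12}y)$, producing at scale $\ell$ a piece whose frequency is localized near $\pm\tfrac{17}{12}2^\ell$. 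Since $\tfrac{17}{12}\in[\tfrac43,\tfrac32]$, the flat region of $\varphi$, the block $\Delta_\ell$ captures this piece exactly. Hence $\|\Delta_\ell(\rho_0^2)\|_{L^\infty}\gtrsim n^{-1}\log^2 n$ for every $\ell\in\mathbb{N}(n)$, and summing over the $\sim n$ such indices produces $\log^2 n$.

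First, I expand
\begin{equation*}
\rho_0^2=n^{-1}\log^2 n\cdot\sin^2\bigl(\tfrac{17}{12}2^n x\bigr)\Bigl(\sum_{\ell\in\mathbb{N}(n)}\phi(2^\ell(x-2^{2n+\ell}))\Bigr)^2
\end{equation*}
and split the squared sum into diagonal $\ell=\ell'$ terms and cross $\ell\neq\ell'$ terms. By the decay bound \eqref{phi-es} and the exponential spatial separation $|2^{2n+\ell}-2^{2n+\ell'}|\geq 2^{2n}$, each cross term is $O(2^{-Mn})$ uniformly in $x$ and is therefore negligible compared with the target scale $n^{-1}\log^2 n$.

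Next, for each diagonal index $\ell$, I apply the two half-angle identities, giving four Fourier-localized pieces. Writing $a_\ell:=2^{2n+\ell}$ and $y=2^\ell(x-a_\ell)$, I track the Fourier supports in $x$: the function $\theta^2(y)$ is supported in frequencies $\{|\xi|\leq 2^\ell/50\}\subset\{|\xi|<\tfrac34 2^\ell\}$, so the two pieces without the factor $\cos(\tfrac{17}{12}y)$ are annihilated by $\Delta_\ell$; the two pieces carrying $\cos(\tfrac{17}{6}2^n x)$ are supported near $\pm\tfrac{17}{6}2^n$, which for $\ell\leq n/2$ lies far above the annulus $\{\tfrac34 2^\ell\leq|\xi|\leq\tfrac83 2^\ell\}$ and are also killed; the single remaining piece, $-\tfrac14\theta^2(y)\cos(\tfrac{17}{12}y)$, has Fourier support of radius $2^\ell/50$ around $\pm\tfrac{17}{12}2^\ell$, which is contained in $\{\tfrac43\leq 2^{-\ell}|\xi|\leq\tfrac32\}$ where $\varphi\equiv 1$, so $\Delta_\ell$ preserves it identically.

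Consequently, up to a pointwise error of order $2^{-Mn}$,
\begin{equation*}
\Delta_\ell(\rho_0^2)(x)=-\tfrac14 n^{-1}\log^2 n\cdot\theta^2\bigl(2^\ell(x-a_\ell)\bigr)\cos\bigl(\tfrac{17}{12}2^\ell(x-a_\ell)\bigr).
\end{equation*}
The fixed function $y\mapsto\theta^2(y)\cos(\tfrac{17}{12}y)$ has positive $L^\infty$-norm, since $\theta(0)=\tfrac{1}{2\pi}\int\widehat{\theta}\,d\xi>0$ forces the value at $y=0$ to equal $\theta(0)^2>0$; as $L^\infty$ is dilation-invariant, $\|\Delta_\ell(\rho_0^2)\|_{L^\infty}\gtrsim n^{-1}\log^2 n$ for every $\ell\in\mathbb{N}(n)$. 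Summing over the $\geq n/4$ indices in $\mathbb{N}(n)$ yields $\|(\rho_0)^2\|_{B^0_{\infty,1}(\mathbb{N}(n))}\gtrsim\log^2 n$. The main obstacle is the Fourier-support bookkeeping above: one must check that the spectral broadening of the resonant piece caused by the smooth factor $\theta^2$ (radius $\tfrac{1}{50}$ after the rescaling $\xi\mapsto 2^{-\ell}\xi$) does not push its support outside the flat region of $\varphi$; here the margin $\tfrac{1}{12}$ between $\tfrac{17}{12}$ and either endpoint of $[\tfrac43,\tfrac32]$ comfortably dominates this broadening.
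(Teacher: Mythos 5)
Your overall strategy coincides with the paper's: isolate the resonant piece $-\tfrac14\theta^2(y)\cos(\tfrac{17}{12}y)$ via the two half-angle identities, use that its Fourier support lies in the flat region $\{\tfrac43\le 2^{-\ell}|\xi|\le \tfrac32\}$ of $\varphi$ so that $\Delta_\ell$ reproduces it exactly, and dismiss the off-diagonal products by spatial separation. The gap is in the step where you assert, uniformly in $x$, that $\Delta_\ell(\rho_0^2)=-\tfrac14 n^{-1}\log^2 n\,\theta^2(2^\ell(x-a_\ell))\cos(\tfrac{17}{12}2^\ell(x-a_\ell))+O(2^{-Mn})$. You only analyze the action of $\Delta_\ell$ on the diagonal bump with the \emph{same} index $\ell$; but $\Delta_\ell$ also acts on the diagonal terms $\phi^2(2^{\ell'}(x-a_{\ell'}))$ with $\ell'\neq\ell$. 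Their resonant components are indeed annihilated (their supports near $\pm\tfrac{17}{12}2^{\ell'}$ miss the annulus $[\tfrac34 2^\ell,\tfrac83 2^\ell]$ for every $\ell'\neq\ell$), but the low-frequency components $\tfrac14\theta^2(2^{\ell'}(x-a_{\ell'}))$ have Fourier support of radius $2^{\ell'}/50$, which reaches into that annulus as soon as $\ell'\ge \ell+6$. These are \emph{not} killed: one checks $\|\Delta_\ell[\theta^2(2^{\ell'}(\cdot-a_{\ell'}))]\|_{L^\infty}\approx 2^{\ell-\ell'}$, with the mass sitting near $x=a_{\ell'}$, so their total contribution to $\Delta_\ell(\rho_0^2)$ is of order $n^{-1}\log^2 n$ --- the same order as your main term, not $O(2^{-Mn})$. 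The claimed pointwise identity is therefore false, and as written your lower bound on $\|\Delta_\ell(\rho_0^2)\|_{L^\infty}$ does not follow.

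The missing ingredient --- which is exactly what the paper's proof supplies --- is to localize in space before taking the supremum: restrict to the ball $\mathbf{B}_\ell=\{x:2^\ell|x-a_\ell|\le1\}$, where the main term is bounded below by $c\,\theta^2(0)$, and show that every off-index contribution $\Delta_\ell[\phi^2(2^{\ell'}(\cdot-a_{\ell'}))]$, $\ell'\neq\ell$, is $O(2^{-n})$ there. This uses the separation $|a_\ell-a_{\ell'}|\ge 2^{2n}$ together with the rapid decay of both the convolution kernel $2^\ell\check{\chi}(2^\ell\cdot)$ (or $2^\ell\check{\varphi}(2^\ell\cdot)$) and of $\theta$, via the paper's splitting of the $y$-integral into the regions $\mathbf{A}_1$ and $\mathbf{A}_2$. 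Once this localization is inserted, the remainder of your argument (the value $\theta(0)^2>0$ at $y=0$, the cross-term bound, and the count of at least $n/4$ indices in $\mathbb{N}(n)$) goes through and yields the lemma.
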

\begin{proof} It follows that for any $j\in \mathbb{N}(n)$,
\bal\label{l}
\De_j[(\rho_0)^2]=\frac12n^{-1}\log^2n
\cdot\De_jU, \qquad U:=\left(\sum_{\ell\in \mathbb{N}(n)}
\phi(2^\ell(x-2^{2n+\ell}))\right)^2.
\end{align}
We rewrite $U$ as
\bbal
U=\sum_{\ell\in \mathbb{N}(n)}
\phi^2(2^\ell(x-2^{2n+\ell}))+\sum_{\ell,m\in \mathbb{N}(n)\setminus\{\ell=m\}}
\phi(2^\ell(x-2^{2n+\ell}))\phi(2^m(x-2^{2n+m})):=U_1+U_2.
\end{align*}
Then, we have $\De_j U=\De_j U_1+\De_j U_2$. For $\De_jU_2$, we have
\bal\label{y1}
\begin{split}
||\De_jU_2||_{L^\infty}&\leq C||U_2||_{L^\infty}
\\&\leq C\sum_{\ell,m\in \mathbb{N}(n)\setminus\{\ell=m\}}||\phi(2^\ell(x-2^{2n+\ell}))\phi(2^m(x-2^{2n+m}))||_{L^\infty}
\\&\leq C\sum_{\ell,m\in \mathbb{N}(n)\setminus\{\ell=m\}}||\frac{1}{(1+2^\ell|x-2^{2n+\ell}|)^M}\cdot \frac{1}{(1+2^m|x-2^{2n+m}|)^M}||_{L^\infty}
\\&\leq C\sum_{\ell,m\in \mathbb{N}(n)\setminus\{\ell=m\}}2^{-2nM}\leq Cn^22^{-2nM}.
\end{split}
\end{align}
By direct computations, for large enough $n$, we have
\bbal
\Delta_jU_1=\Delta_j\phi^2(2^j(x-2^{2n+j}))+\Delta_j\sum_{j\neq \ell\in \mathbb{N}(n)}\phi^2(2^\ell(x-2^{2n+\ell}))=:\Delta_jU_{1,1}+\Delta_jU_{1,2}.
\end{align*}
Let us introduce the set $\mathbf{B}_j$ defined by $\mathbf{B}_j=\{x:2^j|x-2^{2n+j}|\leq 1\}$.

We can show that
\bbal
||\Delta_j U_1||_{L^\infty(\mathbf{B}_j)}\geq ||\Delta_jU_{1,1}||_{L^\infty(\mathbf{B}_j)}-||\Delta_jU_{1,2}||_{L^\infty(\mathbf{B}_j)}
\end{align*}
Notice that
\bbal
\phi^2(x)=\frac12\theta^2(x)-\frac12\theta^2(x)\cos(\frac{17}{12}x):=\Phi_1(x)+\Phi_2(x),
\end{align*}
then we have
\bbal
\Delta_jU_{1,1}=\Delta_j\Phi_2(2^j(x-2^{2n+j}))=\Phi_2(2^j(x-2^{2n+j})).
\end{align*}
This implies
\bal\label{y2}
||\Delta_jU_{1,1}||_{L^\infty(\mathbf{B}_j)}\geq \frac12\theta^2(0):= c.
\end{align}
By \eqref{phi-es}, we have
\bbal
\|\Delta_jU_{1,2}\|_{L^\infty(\mathbf{B}_j)}&\leq C\sum_{j\neq \ell\in \mathbb{N}(n)}\left\|2^j\int_{\R}
\check{\chi}(2^j(x-y))\phi^2(2^\ell(y-2^{2n+\ell}))
\dd y\right\|_{L^\infty(\mathbf{B}_j)}
\\&\leq C\sum_{j\neq \ell\in \mathbb{N}(n)}\left\|2^j\int_{\R}
\frac{1}{(1+2^j|x-y|)^{M}}\frac{1}{(1+2^\ell|y-2^{2n+\ell}|)^{2M}}
\dd y\right\|_{L^\infty(\mathbf{B}_j)}.
\end{align*}

Dividing the integral region in terms of $y$ into the following two parts to estimate:
\begin{align*}
\mathbf{A}_{1} &:=\left\{y :\; |y-2^{j+2 n} |\leq 2^{2 n}\right\} , \\
\mathbf{A}_{2} &:=\left\{y :\; |y-2^{j+2 n} |\geq 2^{2 n} \right\}.
\end{align*}
For $x \in \mathbf{B}_{j}$ and $y \in \mathbf{A}_{1}$, it is easy to check that
\begin{align*}
\left|y-2^{\ell+2 n}  \right| &=\left|y-2^{j+2 n}  +2^{j+2 n} -2^{\ell+2 n}  \right| \\
& \geq\left|2^{\ell+2 n}  -2^{j+2 n}  \right|-\left|y-2^{j+2 n} \right| \geq 2^{2 n}.
\end{align*}
For $x \in \mathbf{B}_{j}$ and $y \in \mathbf{A}_{2}$, it is easy to check that
\begin{align*}
|x-y| &=\left|x-2^{j+2 n}  +2^{j+2 n} -y\right| \\
& \geq\left|y -2^{j+2 n}  \right|-\left|x-2^{j+2 n} \right| \geq  2^{2 n}-2^{-j}\geq 2^{2 n-1}.
\end{align*}
Then, we have
\bbal
\left\|2^j\int_{\R}
\frac{1}{(1+2^j|x-y|)^{M}}\frac{1}{(1+2^\ell|y-2^{2n+\ell}|)^{2M}}
\dd y\right\|_{L^\infty(\mathbf{B}_j)}\leq C2^j2^{-2nM}\leq C2^{\frac n2}2^{-nM},
\end{align*}
which implies
\bal\label{y3}
\|\Delta_jU_{1,2}\|_{L^\infty(\mathbf{B}_j)}&\leq C2^{-n}
\end{align}
Combining \eqref{y1}-\eqref{y3}, we deduce that for $n\gg1$,
\bbal
\left\|\De_j[(\rho_0)^2]\right\|_{L^\infty}&\geq \left\|\De_j[(\rho_0)^2]\right\|_{L^\infty(\mathbf{B}_j)}
\\&\geq  \left\|\Delta_jU_{1,1}\right\|_{L^\infty(\mathbf{B}_j)}
-\left\|\Delta_jU_{1,2}\right\|_{L^\infty(\mathbf{B}_j)}
-\left\|\De_jU_{2}\right\|_{L^\infty}
\\&\geq cn^{-1}\log^2n.
\end{align*}
Therefore, by the definition of the Besov norm, we have
\bbal
\left\|(\rho_0)^2\right\|_{B^0_{\infty,1}(\mathbb{N}(n))}= \sum_{j\in \mathbb{N}(n)}\left\|\De_j[(\rho_0)^2]\right\|_{L^\infty}\geq c\log^2n.
\end{align*}
This completes the proof of lemma \ref{le-e2}.
\end{proof}

\subsection{Norm Inflation}
Firstly, we define the Lagrangian flow-map $\psi$ associated to $u$ by solve the following ODE:
\begin{equation}\label{flow}
    \left\{
    \begin{array}{ll}
    \frac{\mathrm d}{\mathrm d t}\psi(t,x)=u(t,\psi(t,x)), \\[1ex]
    \psi(0,x)=x.
    \end{array}
    \right.
    \end{equation}
Considering the equation
\begin{equation}\label{vfl}
    \left\{
    \begin{array}{ll}
    \pa_t v+u\partial_xv=P,  &t \in [0,T), \quad x \in \mathbb{R},\\[1ex]
    v(0,x)=x, x \in \mathbb{R}.
    \end{array}
    \right.
    \end{equation}
Taking $\De_j$ on \eqref{vfl}, we get
$$\pa_t (\De_jv)+u\partial_x\De_jv=[u,\De_j]\pa_xv+\De_jP:=R_j+\De_jP,$$
Due to (\ref{flow}), then
$$\frac{\mathrm d}{\mathrm d t}\big((\De_jv)\circ\psi\big)=R_j\circ\psi+\De_jP\circ\psi,$$
which is equivalent to the integral form
\bal\label{l6}
(\De_jv)\circ \psi&=\De_jv_0+\int^t_0R_j\circ \psi\dd \tau +\int^t_0\De_jP\circ \psi\dd \tau.
\end{align}

Now we can go back to the proof of the Theorem \ref{th1}. For $n\gg1$, it is easy to check that $(u,\rho)\in \mathcal{C}([0,1];H^{3}\times H^2)$ which satisfy for $t\in[0,1]$
\begin{align*}
&\|u(t)\|_{W^{1,\infty}}+||\rho(t)||_{L^\infty}\leq (\|u_0\|_{W^{1,\infty}}+||\rho_0||_{L^\infty})\exp\left(C\int_0^t \|\pa_xu(\tau),\rho(\tau)\|_{L^{\infty}}\mathrm{d}\tau\right).
\end{align*} 
For $n\gg1$, we have for $t\in[0,1]$
\bal\label{u,rho-es}
\|u\|_{W^{1,\infty}}+\|\rho\|_{L^\infty}\leq C\|\rho_0\|_{L^\infty}\leq Cn^{-\frac12}\log n.
\end{align}
Moreover, we also have  $t\in[0,1]$
\bal\label{u-es}
\|u\|_{L^\infty}&\leq C\int^t_0(\|u\|^2_{W^{1,\infty}}+||\rho||^2_{L^\infty})\dd \tau
\leq Cn^{-1}\log^2 n.
\end{align}
To prove Theorem \ref{th1}, it suffices to show that there exists $t_0\in(0,\frac{1}{\log n}]$ such that
\bal\label{m}
\|u(t_0,\cdot)\|_{B^1_{\infty,1}}\geq \log\log n.
\end{align}
We prove \eqref{m} by contraction. If \eqref{m} were not true, then
\bal\label{m1}
\sup_{t\in(0,\frac{1}{\log n}]}\|u(t,\cdot)\|_{B^1_{\infty,1}}< \log\log n.
\end{align}
We divide the proof into two steps.

{\bf Step 1: Lower bounds for $(\De_ju)\circ \psi$}

Now we consider the equation along the Lagrangian flow-map associated to $u$.
Utilizing \eqref{l6} to \eqref{dp} yields
\bbal
(\De_ju)\circ \psi&=\De_ju_0+\int^t_0R^1_j\circ \psi\dd \tau +\int^t_0\De_jF\circ \psi\dd \tau+\int^t_0\big(\De_jE\circ \psi-\De_jE_0\big)\dd \tau+t\De_jE_0,
\end{align*}
where
\bbal
&R^1_j=[u,\De_j]\pa_xu,\quad F=-\frac32\pa_x(1-\pa^2_x)^{-1}(u^2), \quad E=-\frac12\pa_x(1-\pa^2_x)^{-1}(\rho^2).
\end{align*}
Due to Lemma \ref{le-e2}, we deduce
\bal\label{g1}
\sum_{j\in \mathbb{N}(n)}2^j\|\De_jE_0\|_{L^\infty}
\approx \sum_{j\in \mathbb{N}(n)}\|\De_j\pa_xE_0\|_{L^\infty}
\geq c\sum_{j\in \mathbb{N}(n)}\|\De_j(\rho_0)^2\|_{L^\infty}
\geq c\log^2n.
\end{align}
Notice that, for fixed $t$ the Lagrangian flow-map $\psi(t,\cdot)$ is a diffeomorphism of $\mathbb{R}$, then we have for $t\in(0,\frac{1}{\log n}]$,
\bbal
\|f(t,\psi(t,x))\|_{L^\infty}= \|f(t,\cdot)\|_{L^\infty}.
\end{align*}
Then, using \eqref{u,rho-es}, \eqref{m1} and the commutator estimate from Lemma \ref{lem2.2}, we have
\bal\label{g2}
\sum_{j\geq -1}2^j\|R^1_j\circ \psi\|_{L^\infty}&\leq C\sum_{j\geq -1}2^j\|R^1_j\|_{L^\infty}\leq C\|\pa_xu\|_{L^\infty}\|u\|_{B^1_{\infty,1}}\leq C n^{-\frac12}\log n \cdot\log\log n.
\end{align}
Also, by \eqref{u-es}, we have
\bbal
2^j\|\De_jF\circ \psi\|_{L^\infty}&\leq C2^j\|\De_jF\|_{L^\infty}
\leq C\|u\|^2_{L^\infty}
\leq Cn^{-2}\log^4n,
\end{align*}
which implies
\bal\label{g3}
\sum_{j\in \mathbb{N}(n)}2^j\|\De_jF\circ \psi\|_{L^p}\leq C n^{-1}\log^4n.
\end{align}
Combining \eqref{g1}-\eqref{g3} and using Lemmas \ref{le-e1}-\ref{le-e2} yields
\bbal
\sum_{j\in \mathbb{N}(n)}2^j\|(\De_ju)\circ \psi\|_{L^\infty}
&\geq t\sum_{j\in \mathbb{N}(n)}2^j\|\De_jE_0\|_{L^\infty}-\sum_{j\in \mathbb{N}(n)}2^j\|\De_jE\circ \psi-\De_jE_0\|_{L^\infty}-
Cn^{-1}\log^4n
\\&\geq ct\log^2n-\sum_{j\in \mathbb{N}(n)}2^j\|\De_jE\circ \psi-\De_jE_0\|_{L^\infty}-
Cn^{-1}\log^4 n.
\end{align*}
{\bf Step 2: Upper bounds for $\De_jE\circ \psi-\De_jE_0$}

By easy computations, we can see that
\bal\label{E}
\pa_tE+u\pa_xE
&=-\pa_x(1-\pa^2_x)^{-1}(\pa_t\rho \rho)-\frac12u\pa^2_x(1-\pa^2_x)^{-1}(\rho^2)
\\&=-\pa_x(1-\pa^2_x)^{-1}(-\frac12\pa_x(u\rho^2)
-\frac32u_x\rho^2)-\frac12u(1-\pa^2_x)^{-1}(\rho^2)+\frac12u\rho^2 \nonumber
\\&=\frac12\pa^2_x(1-\pa^2_x)^{-1}(u\rho^2)
+\frac32\pa_x(1-\pa^2_x)^{-1}(u_x\rho^2)-\frac12u(1-\pa^2_x)^{-1}(\rho^2)+\frac12u\rho^2 \nonumber
\\&=\frac12(1-\pa^2_x)^{-1}(u\rho^2)-\frac12u\rho^2
+\frac32\pa_x(1-\pa^2_x)^{-1}(u_x\rho^2)-\frac12u(1-\pa^2_x)^{-1}(\rho^2)+\frac12u\rho^2 \nonumber
\\&=:G\nonumber,
\end{align}
where
\bbal
G=&-\frac 12u(1-\pa^2_x)^{-1}(\rho^2)
-\frac 12(1-\pa^2_x)^{-1}\left(-
u\rho^2-\pa_x(3u_x\rho^2)\right).
\end{align*}
Utilizing \eqref{l6} to \eqref{E} yields
\bbal
\De_jE\circ \psi-\De_jE_0=\int^t_0[u,\De_j]\pa_xE\circ \psi\dd \tau +\int^t_0\De_jG\circ \psi\dd \tau.
\end{align*}
Using the commutator estimate from Lemma \ref{lem2.2}, one has
\bbal
2^j\|[u,\De_j]\pa_xE\|_{L^\infty}\leq C(\|\pa_xu\|_{L^\infty}\|E\|_{B^1_{\infty,\infty}}
+\|\pa_xE\|_{L^\infty}\|u\|_{B^1_{\infty,\infty}})\leq C(\|u\|_{W^{1,\infty}}+\|\rho\|_{L^\infty})^3
\end{align*}
and
\bbal
2^j\|\De_jG\|_{L^\infty}\leq C(\|u\|_{W^{1,\infty}}+\|\rho\|_{L^\infty})^3.
\end{align*}
Then, we have from \eqref{u,rho-es}
\bbal
2^j\|\De_jE\circ \psi-\De_jE_0\|_{L^\infty}\leq C(\|u\|_{W^{1,\infty}}+\|\rho\|_{L^\infty})^3\leq Cn^{-\frac32}\log^3n,
\end{align*}
which leads to
\bbal
\sum_{j\in \mathbb{N}(n)}2^j\|\De_jE\circ \psi-\De_jE_0\|_{L^p}
\leq Cn^{-\frac12}\log^3n.
\end{align*}
Combining Step 1 and Step 2, then for $t=\frac{1}{\log n}$, we obtain for $n\gg1$
\bbal
\|u(t)\|_{B^1_{\infty,1}}&\geq \|u(t)\|_{B^1_{\infty,1}(\mathbb{N}(n))}\geq C\sum_{j\in \mathbb{N}(n)}2^j\|(\De_ju)\circ \psi\|_{L^\infty}
\\&\geq c t\log^2n-Cn^{-\frac12}\log^3 n\geq \log\log n,
\end{align*}
which contradicts the hypothesis \eqref{m1}.

In conclusion, we obtain the norm inflation and hence the ill-posedness of the two component  Degasperis-Procesi equation. Thus, Theorem \ref{th1} is proved.

\section*{Acknowledgments} J. Li is supported by the National Natural Science Foundation of China (11801090 and 12161004) and Jiangxi Provincial Natural Science Foundation, China (20212BAB211004). M. Li was supported by Educational Commission Science Programm of Jiangxi Province (No. GJJ190284) and Natural Science Foundation of Jiangxi Province (No. 20212BAB211011). W. Zhu is supported by the Guangdong Basic and Applied Basic Research Foundation, China (2021A1515111018).

\end{document}